\documentclass[twoside]{amsart}
\usepackage{latexsym}
\usepackage{amssymb,amsmath,amsopn}
\usepackage[dvips]{graphicx}   
\usepackage{color,epsfig}      

               {\begin{list}{}{\leftmargin#1\rightmargin#2}\item{}}%
               {\end{list}}
 
\newtheorem{thm}{Theorem}

\newtheorem{prop}{Proposition}
\theoremstyle{definition}
\newtheorem{defn}{Definition}
\newtheorem{rem}{Remark}

\renewcommand{\Re}{\mathbb R}

\newcommand{\BB}{\mathbf B}

\renewcommand{\S}{\mathbb{S}}

\def\bea{\begin{eqnarray}}
\def\eea{\end{eqnarray}}

\DeclareMathOperator{\bd}{bd}

\DeclareMathOperator{\diam}{diam}

\parskip=4pt

\begin{document}
\title[The Eikonal decreases the isoperimetric quotient]{The isoperimetric  quotient of  a convex body decreases monotonically under the Eikonal abrasion model}
\author[G. Domokos and Z. L\'angi] {G\'abor Domokos and Zsolt L\'angi}
\address{G\'abor Domokos, MTA-BME Morphodynamics Research Group and Dept. of Mechanics, Materials and Structures, Budapest University of Technology,
M\H uegyetem rakpart 1-3., Budapest, Hungary, 1111}
\email{domokos@iit.bme.hu}
\address{Zsolt L\'angi, MTA-BME Morphodynamics Research Group and Dept. of Geometry, Budapest University of Technology,
Egry J\'ozsef utca 1., Budapest, Hungary, 1111}
\email{zlangi @math.bme.hu}

\subjclass[2010]{35Q85, 35Q86, 52A39}
\keywords{Eikonal equation, abrasion, elongation, asteroid}

\begin{abstract}
We show that under the Eikonal abrasion model, prescribing uniform normal speed in the direction of the inward surface normal, the isoperimetric quotient of a convex shape is decreasing monotonically. 
\end{abstract}
\maketitle

\section{Introduction}
The Eikonal equation is a nonlinear partial differential equation describing, among other things, the evolution of surfaces. In this paper we study a special case of this equation restricted to convex surfaces which, in compact notation, may be written for the evolution $K(t)$ of a convex body  $K \in \Re^d, d>1$ as
\begin{equation}
\label{e1}
v(p)=1,
\end{equation}
where $p \in \bd K$ is a point on the boundary of $K$ and  $v(p)$ is the speed by which $p$ moves in the direction of the inward surface normal.  This equation may also be written in the traditional PDE notation, for example,  in the plane, for the radial evolution of a curve $r(\varphi),$  Equation (\ref{e1}) is equivalent to
\begin{equation}\label{PDE}
\frac{\partial r}{\partial t}= - \frac{1}{r}\sqrt{r^2+\left(\frac{\partial r}{\partial \varphi}\right)^2}.
\end{equation}
The Eikonal equation has broad applications, ranging from geometric optics to abrasion. Two possible geometric interpretations of (\ref{e1}) appear in the literature  (see \cite{Arnold}, Theorem 1 and \cite{bloore}), depending on which application is targeted by the model. We describe these interpretations below and illustrate them in Figure~\ref{fig0}.
                                          
In the \emph{Eikonal wavefront model},  starting with the boundary of a convex body $K(0)$, one obtains the evolving hypersurface at time $t$ by translating every \emph{point} of its boundary in the direction of the inward surface normal by a vector of length $t$
(cf. Figure \ref{fig0} (A)). If $K(0)$ is a smooth, convex body with minimal curvature radius $r_{\min}(0)$ (i.e. the reciprocal of the maximal principal curvature) then  in the Eikonal wavefront model the evolving hypersurface  will exhibit its first singularity at $t=r_{\min}(0)$ and for $t>r_{\min}(0)$ it will develop self-intersecting, non-convex parts.

In the \emph{Eikonal abrasion model}, starting with the boundary of a convex body $K(0)$, one obtains the evolving hypersurface at time $t$ by translating  \emph{the supporting half space} at every point of its boundary in the direction of the inward surface normal by a vector of length $t$
(cf. Figure \ref{fig0} (B)). Obviously, in this model the evolving hypersurface $K(t)$ will remain convex, however, initially smooth shapes will also develop singularities. The first  such singularity appears, similarly to the wavefront model, at  $t=r_{\min}(0)$. The singularities of the evolving hypersurface in the Eikonal abrasion model correspond to the self-intersections  in the Eikonal wavefront model. 

\begin{figure}
\begin{center}
\includegraphics[width=12cm]{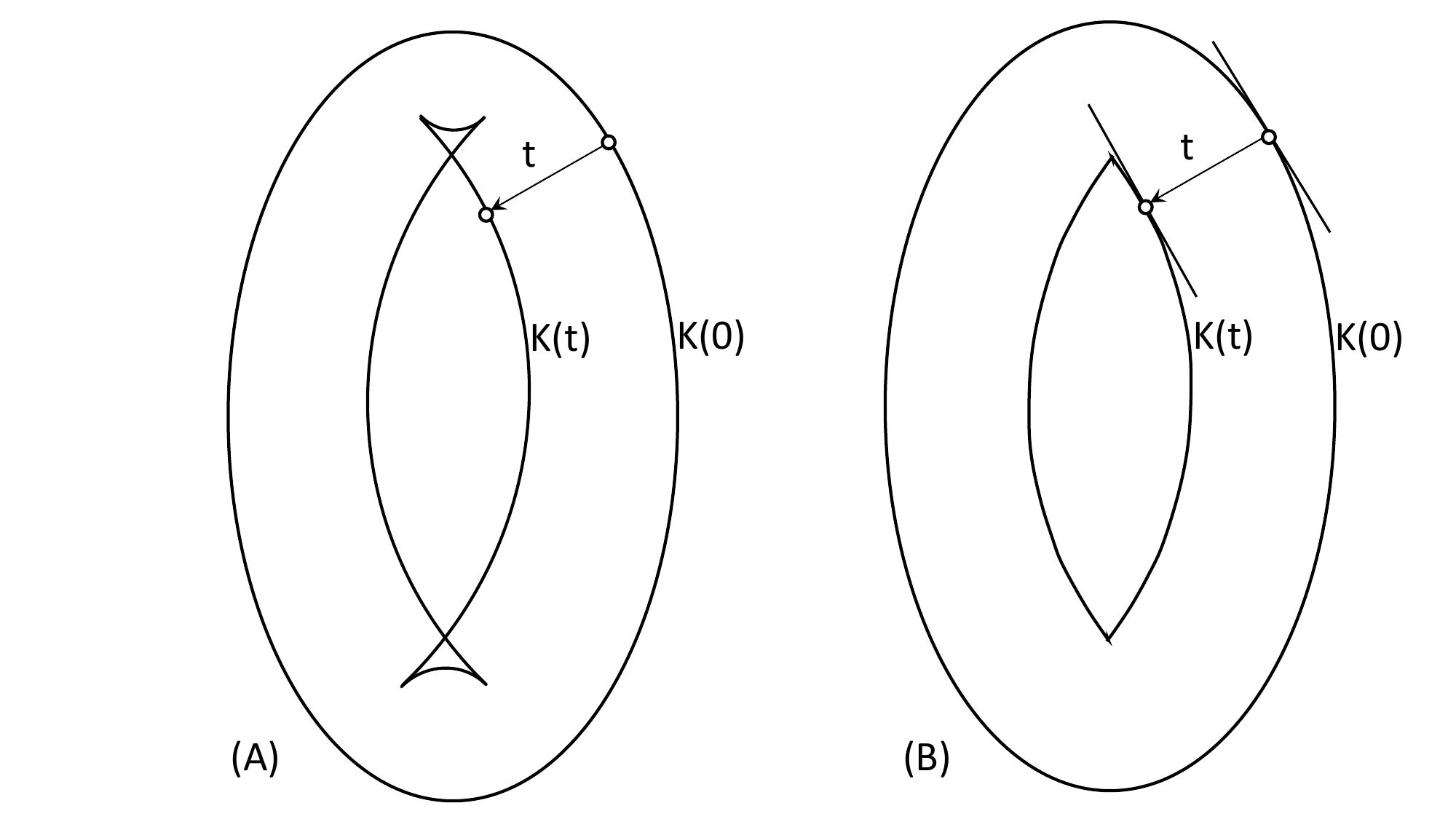}
\caption{Alternative interpretations of the Eikonal equation (\ref{e1}): (A) The Eikonal Wavefront Model and (B) the Eikonal Abrasion Model.
\label{fig0}}
\end{center}
\end{figure}
                                                                                  
The current paper focuses on the Eikonal abrasion model  (for details about global geometrical evolution in the wavefront model see, for example, \cite{Knill}). 
In an abrasion process an abrading object $K$ with surface area $A(K)$ is bombarded by abraders $k$ with surface area $A(k)$ arriving from directions uniformly distributed over the sphere and the surface ratio $\mu = A(k)/A(K)$ is a characteristic parameter of the process \cite{bloore}.
In this context, the Eikonal equation has been introduced by Bloore \cite{bloore} as a model for the limit $\mu \to 0$. This model can explain, among other things, the shapes of desert rocks abraded by wind-blown sand \cite{Laity} and also the shapes of asteroids \cite{Domokosetal}, which evolve under many small impacts from micrometeorites. In the latter application a geometric feature of the Eikonal model
played a key role: it has been observed \cite{oumuamua} that evolution under the Eikonal abrasion model  decreases the roundedness (i.e. increases the thinness/flatness) of a shape.   This observation offered so far the only natural explanation for the grossly elongated shape of the first observed interstellar asteroid `Oumuamua \cite{oumuamua}. The aim of the present note is to give a mathematical verification of this observation. We prove that in this model the \emph{isoperimetric quotient}, which is one of the most broadly used  measures of resemblance to a sphere, decreases as a function of time.
There has been great interest in proving the monotonicity of the isoperimetric quotient and related quantities under geometric partial differential equations, most notably, curvature-driven evolution equations \cite{gage, Hamilton, Huisken}. Even though the Eikonal equation (\ref{e1}) is fairly different from these flows, 
the evolution of the isoperimetric quotient under (\ref{e1}) still looks like an attractive problem: as we will point out in Subsection \ref{other}, in mathematical models of abrasion the Eikonal model is coupled with curvature-driven terms.


In order to formulate our main result, we introduce some notation for the concepts described above.  First, by $\BB^d$ we denote the closed unit ball with the origin $o$ as its center, and set $\S^{d-1}=\bd \BB^d$.
Let $K$ be a convex body in $\Re^d$; that is, a compact, convex set with nonempty interior. For any $t \geq 0$, we denote by $h_K : \S^{d-1} \to \Re$ the \emph{support function} of $K$, and let $H_K(u,t)$ be the closed half space defined by $\{ x \in \Re^d : \langle x,u \rangle \leq h_K(u)-t \}$. Furthermore, for any $t>0$, we set $K(t) = \bigcap\limits_{u \in S^{d-1}} H_K(u,t)$. Observe that if $r(K)$ is the radius of a largest ball contained in $K$, then for any $0\leq t < r(K)$ $K(t)$ is a convex body, for $t=r(K)$, $K(t)$ is a compact, convex set with no interior point, and for $t > r(K)$, $K(t)=\emptyset$. Following \cite{pisanski}, we define the isoperimetric quotient of any convex body $M$ as $I(M)=\frac{V(M)}{(A(M))^{\frac{d}{d-1}}}$ where $V(\cdot)$ and $A(\cdot)$ denotes $d$-dimensional volume and $(d-1)$-dimensional surface area, respectively.
Our main result is as follows.

\begin{thm}\label{thm:isoperimetric_quotient}
For any convex body $K$, either $I(K(t))$ is strictly decreasing on $[0,r(K))$, or
there is some value $t^{\star} \in [0,r(K))$ such that $I(K(t))$ is strictly decreasing on $[0,t^{\star}]$ and is a constant on $[t^{\star},r(K))$. Furthermore, in the latter case, for any $t > t^{\star}$, $K(t)$ is homothetic to $K(t^{\star})$.
\end{thm}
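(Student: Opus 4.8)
The plan is to differentiate $I(K(t))$ with respect to $t$ and show the derivative is nonpositive, with equality holding only in a very rigid situation. Under the Eikonal abrasion model, the classical first variation formulas give $\frac{d}{dt}V(K(t)) = -A(K(t))$ (each piece of boundary moves inward at unit normal speed, sweeping volume equal to the surface area) and $\frac{d}{dt}A(K(t)) = -(d-1)M(K(t))$, where $M(K(t)) = \int_{\bd K(t)} H\, d\sigma$ is the integral of mean curvature (the first Minkowski quantity up to normalization). These hold for $t$ in the range where $K(t)$ is a convex body; the fact that singularities may form for $t > r_{\min}$ must be handled by working with the measure-theoretic / mixed-volume versions of these identities, which remain valid for general convex bodies, so I would phrase everything via mixed volumes $W_i(K(t))$, using $W_0 = V$, $(d-1) W_1 = $ surface area, $\frac{d}{dt} W_i(K(t)) = -(d-i) W_{i+1}(K(t))$.

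With $V = W_0$ and $A = dW_1$, a direct computation gives
\begin{equation}
\frac{d}{dt} I(K(t)) = \frac{d}{dt}\left( \frac{V}{A^{d/(d-1)}} \right)
= \frac{V'A - \frac{d}{d-1} V A'}{A^{d/(d-1)+1}}
= \frac{-A^2 + \frac{d}{d-1}(d-1) V W_2}{A^{d/(d-1)+1}} \cdot (\text{const}),
\end{equation}
so up to a positive factor the sign of $I'$ is the sign of $d V W_2 - A^2 = d\, W_0 W_2 - d^2 W_1^2$, i.e. of $W_0 W_2 - d W_1^2$. Hence the theorem reduces to the inequality $d\,W_1(K)^2 \geq W_0(K) W_2(K)$ for every convex body $K$, with a characterization of the equality case. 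Actually the clean statement to aim for is $W_1^2 \geq W_0 W_2$ (Alexandrov–Fenchel, the classical quermassintegral inequality), but one needs the \emph{stronger} constant $d$, or a more careful bookkeeping of the exponents; I would recompute the exponents carefully — the Alexandrov–Fenchel inequality $W_1^2 \geq W_0 W_2$ together with $W_1 \geq W_0^{(d-1)/d} \kappa_d^{1/d}$ should combine to give exactly what is needed. This is the step I expect to be the main obstacle: getting the inequality with the right constant so that $I' \leq 0$, rather than merely $I'$ bounded.

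For the equality analysis: $I'(t_0) = 0$ forces equality in the relevant Alexandrov–Fenchel / quermassintegral inequality at $K(t_0)$, and the known equality characterization says $K(t_0)$ must be a ball (or, in the borderline mixed-volume cases, a body homothetic to a ball — but since we are in the strictly convex-body regime here, a genuine Euclidean ball). Once $K(t^\star)$ is a ball of radius $\rho$, the abrasion flow keeps it a ball: $K(t^\star + s)$ is the ball of radius $\rho - s$, which is homothetic to $K(t^\star)$, and along this family $I$ is constant (balls all share the same isoperimetric quotient). Conversely, if $I$ is ever locally constant on an interval then $I' \equiv 0$ there, so $K(t)$ is a ball for all $t$ in that interval, and in particular for the infimum $t^\star$ of such $t$; monotonicity and strict decrease on $[0,t^\star]$ then follow because $I' < 0$ wherever $K(t)$ is not a ball. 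Finally I would verify the measurability/continuity needed to conclude the dichotomy — $I(K(t))$ is continuous (indeed Lipschitz on compact subintervals via the mixed-volume formulas), the set where $K(t)$ is a ball is closed under the flow and, once entered, never left, which yields precisely the two alternatives in the statement.

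A remaining technical point is that the derivative formulas $\frac{d}{dt} W_i(K(t)) = -(d-i)W_{i+1}(K(t))$ need justification when $\bd K(t)$ is not $C^2$: this follows from the Steiner-type formula relating $K(t)$ to inner parallel bodies, or directly from the fact that $K(t)$ is the inner parallel body $K_{-t}$ and the quermassintegrals of inner parallel bodies are known to be differentiable a.e. with the stated derivatives (and the argument only needs the inequality $I(K(t_2)) \leq I(K(t_1))$ for $t_1 < t_2$, which can be obtained by integrating a one-sided derivative bound, avoiding delicate regularity). I would state this as a lemma and then the theorem follows from the two ingredients: the pointwise differential inequality $I' \leq 0$, and the Alexandrov–Fenchel equality case.
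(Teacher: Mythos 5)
Your plan founders on the derivative formulas you assume for the flow. The relation $\frac{d}{dt} W_i(K(t)) = -(d-i)W_{i+1}(K(t))$ is a Steiner-formula identity for \emph{outer} parallel bodies; the Eikonal abrasion flow produces \emph{inner} parallel bodies, and for these it is false as soon as $\bd K(t)$ has singular points (which it develops after time $r_{\min}$ even for smooth initial data). Only the case $i=0$ survives in the form you need ($V'=-A$); for $i=1$ the surface area decreases \emph{strictly faster} than $-d(d-1)W_2$ at a singular body (already for a polygon in the plane, the perimeter of the inner parallel body decreases at rate $2\sum_i\tan(\theta_i/2)>2\pi$). This failure is in the unfavorable direction: in $I'=A^{-\frac{2d-1}{d-1}}\bigl(-A^2-\tfrac{d}{d-1}VA'\bigr)$ a more negative $A'$ pushes $I'$ up, so the classical quermassintegral inequality $W_1^2\geq W_0W_2$ (which, after fixing the arithmetic slip in your display --- the correct factor is $d^2$, not $d$, so no ``stronger constant'' is needed) does not close the argument. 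The paper circumvents exactly this obstacle by replacing the ball with the \emph{form body} $F_0=F(K(t_0))$: one has the sandwich $K_0+(t_0-t)\BB^d\subseteq K(t)\subseteq K_0+(t_0-t)F_0$, the auxiliary quantity $\underline{I}(t)=V(L(t))/A(M(t))^{d/(d-1)}$ satisfies $\underline{I}\leq I$ with equality at $t_0$, its one-sided derivative is computed from Minkowski's polynomial expansion in the mixed volumes $V_j(K_0)=V(K_0,\ldots,K_0,F_0,\ldots,F_0)$, the key identity $dV_1(K_0)=A(K_0)$ (because $F_0$ is circumscribed about $\BB^d$) makes the denominators match, and then Minkowski's \emph{second inequality for the pair} $(K_0,F_0)$, $V_1^2\geq V_0V_2$, gives $\underline{I}'_-(t_0)\leq 0$. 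Some such replacement of the ball by $F_0$ is not optional: it is what absorbs the extra loss of surface area at singular points.

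Your equality analysis is also wrong, and in a way that contradicts the theorem itself. Equality in Minkowski's second inequality does not force a ball: by Schneider's Theorem 7.6.19 it forces $K_0$ to be homothetic to a $(d-2)$-tangential body of the second body, and in the paper's setting one then deduces that $K_0$ is homothetic to its form body $F_0$. The constancy branch of the theorem is realized, e.g., by any polygon circumscribed about a circle (a square, any triangle): under erosion these evolve homothetically and $I$ is constant, yet they are not balls, and the bodies involved are nowhere strictly convex, so your appeal to a ``strictly convex-body regime'' has no force. Consequently the rigidity step must be run with the form body, not the ball, and the converse direction ($K_0$ homothetic to $F_0$ implies $K(t)$ homothetic to $K_0$ for $t>t_0$ and $I$ constant) is what produces $t^{\star}$; your final continuity/dichotomy argument is fine in spirit and matches the paper's concluding step, but it only works once the two ingredients above are repaired.
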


While our result highlights a fundamental property of the nonlinear PDE (\ref{e1}), our proof will rely entirely on ideas and concepts from convex geometry.
In particular, as one can see in Section~\ref{sec:proof}, the examined problem, and also the applied tools, are closely related to those used in \cite{larson}, where,
by an elegant method, the author gave a sharp lower estimate on the surface area of an inner parallel body of a convex body. These bodies also play a central role in the current paper:  Equation (\ref{e1})  can be regarded as a \emph{map} from a convex body to  one of its inner parallel bodies where time plays the role of the distance of this body from the original one. In this paper we study geometric properties of this map.
The proof relies on an extensive use of mixed volumes and inequalities related to them. For the definitions and descriptions of these tools, the reader is referred to the books \cite{Gruber, schneider} or the paper \cite{larson}.
Some elements of our proof can also be found in \cite[Section 7.2]{schneider} and in \cite{stacho}.

\section{Proof of Theorem~\ref{thm:isoperimetric_quotient}}\label{sec:proof}

 During the proof, for any convex body $M$ and boundary point $p \in \bd M$ we denote the set of outer unit normal vectors of $M$ at $p$ by $N_p(M) \subseteq \S^{d-1}$.
Note that this set is a spherically convex,  compact set, and hence, in particular, it is contained in an open hemisphere of $\S^{d-1}$.
The set of smooth (or regular) points of $M$ (i.e. the set of boundary points $p$ of $M$ such that $N_p(M)$ is a singleton) is denoted by $R(M)$.
Finally, we set  $N(M)= \bigcup_{p \in R(M)} N_p(M)$, and
\begin{equation}\label{eq:P}
F(M) = \{ x \in \Re^d: \langle x,u \rangle \leq 1 \hbox{ for every } u \in N(M)\}.
\end{equation}
This set is called the \emph{form body} of $M$ (cf. \cite{larson} and  \cite{schneider}), where the fact that it is indeed a convex body
is an easy consequence of Alexandrov's theorem (cf. Theorem 5.4 in \cite{Gruber}), stating that $\bd M$ is smooth (twice differentiable) at almost every point.


Now we prove Theorem~\ref{thm:isoperimetric_quotient}.
As volume and surface area are continuous with respect to Hausdorff distance and are strictly increasing with respect to containment in the family of convex bodies, both $V(K(t))$ and $A(K(t))$ are positive continuous, strictly decreasing functions of $t$ on the interval $[0,r(K))$. This implies that $I(K(t))$ is also 
continuous on this interval.

Consider some $0 < t_0 < r(K)$. For brevity, we set $K(t_0)=K_0$,  $N(K(t_0))=N_0$, $F(K(t_0))=F_0$ and $I(K(t))=I(t)$.
Let $p \in \bd K_0$. By the definition of $K(t)$, for any $0 \leq t \leq t_0$ the minimum of the distances of $p$ from the supporting hyperplanes of $\bd K(t)$ is equal to $t_0-t$. Since this minimum is attained at some point $q \in \bd K(t)$, the ball $p + (t-t_0) \BB^d$ touches $\bd K(t)$ from inside.
Thus, $K_0 + (t_0-t) \BB^d \subseteq K(t)$. On the other hand, we also have $h_{K(t)}(u) = h_{K_0}(u)+(t_0-t)$ for any $u \in N_0$, which yields that
$K(t) \subseteq K_0 + (t_0-t)F_0$.
 We remark that the geometric arguments leading to these two containment relations can be found in a comprehensive form in \cite{larson}.
We note that if $K_0$ is smooth, then so is $K(t)$ for every $0 \leq t \leq t_0$, implying that $K(t)=K_0 + (t_0-t) \BB^d$ in this case. More generally, $N(K(t))$ decreases and $F(K(t))$ increases in time with respect to containment; that is, for any $0 \leq t_1 < t_2 < r(K)$ we have $N(K(t_2)) \subseteq N(K(t_1))$
and $F(K(t_1)) \subseteq F(K(t_2))$.

Set $L(t) = K_0 + (t_0-t) \BB^d$, and $M(t) = K_0 + (t_0-t) F_0$. By Minkowski's theorem on mixed volumes \cite{schneider}, we have
\begin{equation}\label{eq:mixed_volumes}
V(L(t))= \sum_{j=0}^d \binom{d}{j} (t-t_0)^j W_j(K_0), \hbox{ and }
V(M(t))= \sum_{j=0}^d \binom{d}{j} (t-t_0)^j V_j(K_0),
\end{equation}
where $W_j(K_0)$ is the $j$th quermassintegral of $K_0$, and we denote the mixed volume $V(\overbrace{K_0,\ldots,K_0}^{d-j}, \overbrace{F_0,\ldots,F_0}^{j}  )$ by $V_j(K_0)$.

Observe that $W_0(K_0)=V_0(K_0)=V(K_0))$, and that $d W_1(K_0) = A(K_0)$.
We show that $d V_1(K_0) = A(K_0)$ as well.
Since both mixed volumes and surface area are continuous with respect to Hausdorff distance, it suffices to prove this equality for polytopes, and thus, assume for the moment that $K_0$ is a polytope. In this case $F_0$ is the polytope, circumscribed about $\BB^d$, whose outer unit facet normal vectors coincide with those of $K_0$. Thus, $M(t)$ can be decomposed into $K_0$, cylinders of height $t_0-t$ with the facets of $K_0$ as bases, and sets in the $\rho (t_0-t)$-neighborhood of the $(d-2)$-faces of $K(t_0)$, where $\rho =  \diam F_0$.
The volume of this set is
\[
V(M(t))=V(K_0) + (t_0-t) A(K_0) + O((t_0-t)^2),
\]
implying $-dV_1(K_0) = \left. \frac{d}{dt} V(M(t)) \right|_{t=t_0} = -A(K(t_0))$.

Let us define the quantity 
\[
\underline{I}(t) = \frac{V(L(t))}{A(M(t))^{\frac{d}{d-1}}}.
\]
We note that $\underline{I}(t)$ depends on $t_0$ and it is defined only for $0 \leq t \leq t_0$. Furthermore,
as both volume and surface area are strictly increasing with respect to inclusion, we have $\underline{I}(t) \leq I(t)$.
Differentiating this quantity, the formulas in (\ref{eq:mixed_volumes}) and their connection with $A(K_0)$ yields that
\[
\underline{I}'_{-}(t_0) = -\frac{d^2}{A(K_0)^{\frac{2d-1}{d-1}}} 
\left( V_1(K_0)^2-V_0(K_0)V_2(K_0)\right),
\]
which is not positive by the second inequality of Minkowski \cite{Gruber}.

We show that if $\underline{I}'_{-}(t_0)=0$, then $K_0$ is homothetic to $F_0$.
By Theorem 7.6.19 in \cite{schneider}, since both $K_0$ and $F_0$ are $d$-dimensional, $(V_1(K_0))^2 = V_0(K_0)V_2(K_0)$ implies that $K_0$ is homothetic to a $(d-2)$-tangential body of $P$. More specifically, $K_0$ has a homothetic copy $K'$ such that $F_0 \subseteq K'$, and every supporting hyperplane of $K'$ that does not support $F_0$ contains only $(d-3)$-singular  (cf. \cite[Section 2.2]{schneider}), or in particular, singular points of $K'$. Hence, every supporting hyperplane of $K'$ that contains a smooth point of $\bd K'$ supports $F_0$ as well. Thus, the definition of $F_0$ and the relation $F_0 \subseteq K'$ yields $F_0=K'$.
This means that if $\underline{I}'_{-}(t_0) = 0$, then $F_0$ is homothetic to $K_0$. Note that the reversed statement also holds: if $F_0$ is homothetic to $K_0$, then $\underline{I}'_{-}(t_0) = 0$, and even more, in this case $K(t)$ is homothetic to $K_0$ for any $t > t_0$.

Let $t^{\star}$ denote the smallest value of $t$ such that $\underline{I}'_{-}(t) = 0$. Then $K(t)$ is homothetic to $K(t^{\star})$ for any $t \in [t^{\star},r(K))$, and $I(t)$ is a constant on this interval. To finish the proof we need to show that $I(t)$ strictly decreases on $[0,t^{\star}]$.

Since $\underline{I}'_{-}(t_0) < 0$ for any $0 < t_0 < t^{\star}$, for any such $t_0$ there is some $\varepsilon = \varepsilon(K,t_0) > 0$ such that $I(t) \geq \underline{I}(t) > I(t_0)$ for all $t \in (t_0-\varepsilon,t_0)$; that is, the function $I(t)$ is locally strictly decreasing from the left at every point.
This and the continuity of $I(t)$ implies that $I(t)$ strictly decreases on this interval.
Indeed, suppose for contradiction that for some $t_1 < t_2$ we have $I(t_1) \leq I(t_2)$.
By continuity, $I(t)$ attains its global maximum on $[t_1,t_2]$ at some $t' \in [t_1,t_2]$. Clearly, since $I(t)$ is locally strictly decreasing from the left at $t'$ it follows that $t'=t_1$ and $I(t_1) > I(t_2)$, a contradiction.
\qed

\section{Discussion}
In this section we will point out connections to related results in convex geometry and also 
put our result in a broader setting to show that it may have implications beyond the specific PDE (\ref{e1}).

\subsection{Connection to Lindel\"of's Theorem and another type of asphericity}
While our paper is primarily aimed to prove properties of the Eikonal abrasion model (\ref{e1}), the proof of Theorem \ref{thm:isoperimetric_quotient} also implies a modest improvement of Lindel\"of's Theorem \cite{lindelof}. 
Let $K$ be a convex $d$-polytope with outer unit normal vectors $u_1,u_2, \ldots,u_n$ of its facets. Then $F(K)$ is the polytope with the same vectors as outer facet unit normal vectors, circumscribed about the unit ball $\BB^d$. For this case, Theorem~\ref{thm:isoperimetric_quotient} states that $I(K + tF(K))$ with $t \in [0,\infty )$ (or, since homothety does not change the isoperimetric ratio, $( (1-t) K + t F (K) )$ with $t \in [0,1]$), is a { strictly increasing function of $t$, unless $K$ is homothetic to $F(K)$.
This special case is a stronger form of Lindel\"of's theorem \cite{lindelof} stating that among convex polytopes with given outer facet unit normal vectors, those circumscribed about a ball has maximal isoperimetric ratio.

 This connection leads us to the following definition.

\begin{defn}\label{defn:envelope}
Let $K$ be a convex body.  Let the convex body $E(K)$ be the intersection of the closed supporting half spaces whose boundary touches each largest inscribed ball of $K$. We call this closed, convex set the \emph{envelope of $K$ under Equation (\ref{e1})}, or shortly, the \emph{envelope of $K$}.
\end{defn}

We remark that $E(K)$ is a convex body if, and only if there is a unique largest ball inscribed in $K$, $F(K)$ is contained in a translate of $\frac{1}{r(K)} E(K)$, and that the quantity $t^{\star}$ appearing in Theorem~\ref{thm:isoperimetric_quotient} has the property that this is the smallest value of $t$ such that this containment is not strict. In the spirit of \cite{dudov}, we will use the following

\begin{defn}\label{def1}
Let $K$ be a convex body and let $r(K)$ and $R(K)$ denote the radius of a largest inscribed and the smallest circumscribed sphere of $K$, respectively. Then the \emph{asphericity} of $K$ is given by $\alpha(K)=\frac{R(K)}{r(K)}$.
\end{defn}

\begin{prop}\label{lem1}
Under the Eikonal abrasion model given by Equation (\ref{e1}), acting on the convex body $K(t)$, $\alpha(K(t))$ is an increasing function of $t$ on $[0,r(K))$.
\end{prop}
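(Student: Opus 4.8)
The plan is to derive the whole statement from a single inclusion: that a suitable homothet of $K(t_1)$ centred at an incenter of $K$ fits inside $K(t_2)$ whenever $t_1<t_2$. Throughout write $r_0=r(K)$ and fix an incenter $c$ of $K$ (a centre of a largest ball inscribed in $K$), so that $c+r_0\BB^d\subseteq K$.

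First I would record an equivalence that is immediate from the definition $K(t)=\bigcap_{u\in\S^{d-1}}H_K(u,t)$ by evaluating support functions: for $y\in\Re^d$ and $\varepsilon\ge 0$,
\[
y+\varepsilon\BB^d\subseteq K(t)\quad\Longleftrightarrow\quad \langle y,u\rangle\le h_K(u)-t-\varepsilon\ \text{ for every }u\in\S^{d-1}.
\]
Applying this with $y=c$, $\varepsilon=r_0-t$ shows $c+(r_0-t)\BB^d\subseteq K(t)$ (the right-hand side becomes $c+r_0\BB^d\subseteq K$), and applying it to any ball $z+s\BB^d\subseteq K(t)$ shows $z+(s+t)\BB^d\subseteq K$, whence $s\le r_0-t$. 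Thus $r(K(t))=r_0-t$ on $[0,r_0)$, and $c$ is an incenter of every $K(t)$.

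The key step is the inclusion. Fix $0\le t_1<t_2<r_0$ and set $\lambda=\frac{t_2-t_1}{r_0-t_1}\in(0,1)$. For $x\in K(t_1)$, convexity of $K(t_1)$ together with $c+(r_0-t_1)\BB^d\subseteq K(t_1)$ gives, using $\lambda(r_0-t_1)=t_2-t_1$,
\[
(1-\lambda)x+\lambda\bigl(c+(r_0-t_1)\BB^d\bigr)=y+(t_2-t_1)\BB^d\subseteq K(t_1),\qquad y:=(1-\lambda)x+\lambda c .
\]
By the equivalence above (with $t=t_1$, $\varepsilon=t_2-t_1$) this forces $\langle y,u\rangle\le h_K(u)-t_2$ for all $u$, i.e.\ $y\in K(t_2)$. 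Since $1-\lambda=\frac{r_0-t_2}{r_0-t_1}$, we have $y=\frac{r_0-t_2}{r_0-t_1}(x-c)+c$, and letting $x$ range over $K(t_1)$ yields
\[
\tfrac{r_0-t_2}{r_0-t_1}\bigl(K(t_1)-c\bigr)+c\ \subseteq\ K(t_2).
\]

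Finally I would conclude using that the circumradius is monotone under inclusion, translation invariant and positively homogeneous: the inclusion gives $R(K(t_2))\ge\frac{r_0-t_2}{r_0-t_1}R(K(t_1))$, and dividing by $r(K(t_2))=r_0-t_2$ and using $r(K(t_1))=r_0-t_1$ gives $\alpha(K(t_2))\ge\alpha(K(t_1))$, as claimed. The only point that must be spotted rather than computed is the choice of contraction centre: it is precisely the inscribed ball $c+(r_0-t_1)\BB^d$ that supplies the $(t_2-t_1)$-thick margin allowing the scaled copy to clear the inner-parallel cuts defining $K(t_2)$, so that contracting towards an arbitrary point would fail. Everything else — the two support-function computations and the homogeneity of $R$ — is routine.
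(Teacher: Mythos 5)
Your proof is correct and takes essentially the same route as the paper: both arguments hinge on showing that a homothet of the earlier body, shrunk toward the incenter, is contained in the later inner parallel body, and then conclude from $r(K(t))=r(K)-t$ together with the monotonicity (under inclusion) and positive homogeneity of the circumradius. The only minor differences are presentational: you establish the inclusion for an arbitrary pair $t_1<t_2$ at once via a convex-combination argument with the inscribed ball, whereas the paper checks the support-function inequality $\frac{h_K(u)}{r(K)}\leq\frac{h_K(u)-t}{r(K)-t}$ to compare $K$ with $K(t)$ and then applies the same argument at later times, citing Larson for $r(K(t))=r(K)-t$, which you reprove directly.
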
                                                                                                                    

\begin{proof}
Recall the definition of $K(t)$ in the  Introduction:
\[
K(t) = \bigcap\limits_{u \in S^{d-1}} H_K(u,t),
\]
where $H_K(u,t)$ is the closed half space defined by $\{ x \in \Re^d : \langle x,u \rangle \leq h_K(u)-t \}$.
Since changing the origin only translates $K(t)$, we may assume that $o$ is the center of a largest inscribed ball in $K$.
Then $r(K) = \min_{u \in S^{d-1}} h_K(u)$, and it is easy to see  (for a formal proof, see Lemma 1.4 in \cite{larson}) that $r(K(t))= r(K)-t$ for all values $t \in [0,r(K))$.

Note that for any $t \in [0,r(K))$, the radius of a largest inscribed ball of $\frac{r(K)}{r(K(t))} K(t)$ is $r(K)$.
On the other hand, for any $u \in \S^{d-1}$, the inequality $0 < r(K) \leq h_K(u)$ yields that $\frac{h_K(u)}{r(K)} \leq \frac{h_K(u)-t}{r(K)-t}$ for all $0< t < r(K)$,
implying that $K \subseteq \frac{r(K)}{r(K(t))} K(t)$. Thus, $\alpha(K) \leq \alpha(K(t))$ for every value of $t$.  Since the same argument implies also that $K(t_1) \subseteq K(t_2)$ for all $0 \leq t_1 \leq t_2 < r(K)$, $\alpha(K(t))$ is an increasing function of $t$.
\end{proof}

Observe that the proof of Proposition \ref{lem1} indicates that if $E(K)$ is bounded, then as $t$ tends to $r(K)$, $K(t)$ `approaches' $E(K)$ up to homothety.
The proof also illustrates that the monotonicity of the asphericity given in Definition \ref{def1} is much easier to establish than the monotonicity of the isoperimetric quotient (which we did in Theorem \ref{thm:isoperimetric_quotient}). Nevertheless, as we will point out below, the latter result has broader consequences in abrasion models.

\begin{rem}
Equation (\ref{e1}) can be generalized for an arbitrary strictly convex, smooth finite dimensional normed space with unit ball $C$ in the following way:
Following \cite[Definition 3.2.2.]{thompson}, we say that $v \in \bd C$ is \emph{normal} to $\bd K$ at $p$ if a supporting hyperplane of $K$ at $p$ is parallel to the supporting hyperplane of $C$ at $v$. In particular, we say that if $p-v+C$ touches $K$ then $v$ is an \emph{inward unit normal} of $K$ at $p$, and if $p+v+K$ touches $K$ then it is an \emph{outward unit normal} of $K$.
Consider the evolution defined by
\begin{equation}
\label{e:normed}
v(p)=1,
\end{equation}
where $ p \in \bd K$ and $v(p) \in \bd C$ is the speed by which $p$ moves in the direction of the inward unit normal of $K$ at $p$. 
Using this equation and the \emph{support function} of $K$ with respect to the norm of $C$ (cf. e.g. \cite{Spirova}), we may define the \emph{normed} Eikonal abrasion model with respect to the norm of $C$ like in the Euclidean case.
A straightforward modification of our proof yields that under this Eikonal abrasion model, the quantity
\[
\frac{V(K(t))}{\left( V(\overbrace{K(t),\ldots,K(t)}^{d-1},C) \right)^{\frac{d}{d-1}}}
\]
decreases monotonically for any initial convex body $K(0)$.
Here $V(\cdot)$ denotes $d$-dimensional Euclidean volume, and we remark that every finite dimensional normed space can be equipped by a Haar measure and that this measure is unique up to multiplication of the standard Lebesgue measure $V(\cdot)$ by a scalar \cite{langi, paiva}.
\end{rem}

\subsection{Connection to other  surface evolution models}\label{other}
As  remarked in the introduction, the Eikonal model corresponds to  the rather special limit $\mu \to 0$  in collisional abrasion when the relative size of the incoming abraders and the abraded object approaches zero.
The other extreme limit is $\mu \to \infty$, when
the incoming particles are very large, in this case they can be modeled by hyperplanes. This case has been first treated in the classic paper by Firey \cite{firey}, who arrived at the formula
\begin{equation}\label{firey}
v=c\kappa,
\end{equation}
where $\kappa$ is the curvature  at the boundary point of the abrading particle if (\ref{firey}) is interpreted in two dimensions, it is the Gaussian curvature in the  $3$-dimensional case and $c$ is a scalar coefficient. 
The  planar version of (\ref{firey}) is called the \emph{curve shortening flow}, under which, by a result of  Gage \cite{gage}, isoperimetric quotient is monotonically increasing.
Bloore \cite{bloore} generalized Firey's model by admitting incoming particles with arbitrary size. Quite surprisingly, the PDE describing the general case turned out to be a linear combination of the two extreme scenarios. In two dimensions, Bloore's equation is
\begin{equation}\label{bloore}
v=1+c\kappa,
\end{equation}
where $\kappa$ is the curvature and $c$ is the (normalized) perimeter of the incoming particles. 
Physical intuition suggests that in Equation (\ref{bloore}) the two additive terms represent geometrically opposite, competing effects, however, it is not easy to formalize this observation. Our Theorem
\ref{thm:isoperimetric_quotient}, combined with
Gage's result \cite{gage} shows that as far as the evolution of the isoperimetric ratio is concerned, the two terms are indeed acting in  opposite directions.

The Eikonal equation (\ref{e1}) can be also interpreted as a speed defined for the \emph{outward} surface normal and this leads to the time-reversed Eikonal model where the wavefront and surface evolution interpretations do not need to be distinguished, however, the initial shape $K(0)$ has to be smooth, otherwise the flow is not unique. Unlike the inward flow, the outward Eikonal flow preserves the smoothness of shapes. It is easy to see that in the $t\to \infty$ limit the outward flow converges to the sphere and as a trivial consequence of Theorem \ref{thm:isoperimetric_quotient} we can also see that $I(t)$ is growing monotonically under this flow.  The time-reversed Eikonal flow appears in
one of the most broadly used surface growth equations in soft condensed matter physics, the Kardar-Parisi-Zhang (KPZ) equation \cite{kardar}. 
The deterministic version of the KPZ equation can be written \cite[Equation 29]{marsili} in two dimensions as
\begin{equation}\label{KPZ3}
v=-1+c\kappa,
\end{equation}
which is reminiscent of the Bloore flow (\ref{bloore}), however, the additive terms have opposite sign, so physical intuition suggests that
for any smooth, convex curve $K(0)$ evolving under (\ref{KPZ3}) the isoperimetric quotient $I(t)$ is a monotonically
increasing function. We also remark that the Eikonal abrasion model appears to be the only one among collisional abrasion models where the isoperimetric ratio decreases. However, among models for frictional abrasion \cite{matgeo} one can find analogous evolutions.

\subsection{Evolution of the number of critical points}
One can obtain interesting information about geometric evolutions by identifying quantities which evolve monotonically under the  flow. Such quantities may be either real-valued (prominent  examples are versions of the isoperimetric quotient \cite{Hamilton, gage}),  or alternatively, they may be integer-valued.  Integer-valued quantities have been investigated mostly in connection of the heat equation and other flows related to image processing \cite{mumford, Koenderink} and  a prominent example  is the number $N(t)$ of critical points. Here the evolving hypersurface is defined by the scalar distance $r$ measured either
from a hyperplane (in orthogonal coordinates) or from a  fixed point (in polar coordinates,
e.g. such as in Equation (\ref{PDE})). These hypersurfaces correspond to graphs of real valued functions $r$ defined on $\Re^{d-1}$ or on $\S^{d-1}$, respectively. Then the points satisfying $\triangledown r=0$ are called \emph{critical}.  The function $N(t)$ is constant for almost all values of $t$ and we call it
monotonic if it has jumps only in one direction. In the heat equation it was believed for an extended
period of  time that $N(t)$ is decreasing monotonically, nevertheless, Damon \cite{Damon} showed counterexamples. On the other hand, for the curve-shortening flow (\ref{firey}) Grayson proved \cite{Grayson} that $N(t)$ is indeed decreasing monotonically. 
As illustrated in Subsection \ref{other},
the Eikonal abrasion model (\ref{e1}) is closely related to the curve shortening flow (\ref{firey}) via Bloore's general abrasion model (\ref{bloore}), so establishing a monotonicity result for $N(t)$ under (\ref{e1}) may help to clarify some aspects of shape evolution under (\ref{bloore}).  Unlike the heat equation and curvature-driven flows, in the Eikonal model the evolution of $N(t)$ is very easy to establish. Below we refer to the Eikonal equation written in polar coordinates for the distance function $r$,  which, in two dimensions, is given in (\ref{PDE}), or the geometric observations made in the paragraph before (\ref{eq:mixed_volumes}).
We note that Proposition~\ref{crit} has been proved under some smoothness condition in Lemma 2 of \cite{Dommon}.

\begin{prop}\label{crit}
In the Eikonal wavefront model and in the time-reversed Eikonal model $N(t)$ is constant. In the Eikonal abrasion model $N(t)$ is decreasing monotonically.
\end{prop}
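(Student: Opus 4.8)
The plan is to give the critical points of $r$ an intrinsic geometric description and then follow them under each of the three flows. Put the origin at an interior point of the body (for the abrasion model at the center of a largest inscribed ball, as in the proof of Theorem~\ref{thm:isoperimetric_quotient}). Since the body contains the origin in its interior, the outer normal cone at a boundary point $q$ can never contain $-q/|q|$; hence $q$ is a critical point of the radial function $r$ precisely when $q/|q|$ is an outer normal of the body at $q$, i.e. when the position vector at $q$ is normal to the boundary. For a smooth convex body the reverse Gauss map $u\mapsto h(u)u+\nabla_{\S^{d-1}}h(u)$ (with $\nabla_{\S^{d-1}}$ the tangential gradient on the sphere) sends $u$ to the boundary point with outer normal $u$, so the condition becomes $\nabla_{\S^{d-1}}h(u)=0$; thus $N(t)$ is the number of zeros on $\S^{d-1}$ of $\nabla_{\S^{d-1}}h_{K(t)}$, and by injectivity of the reverse Gauss map this count is in genuine bijection with the critical points of $r$.

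For the wavefront and time-reversed models, start from a smooth $K(0)$. A boundary point moving along its own inward (resp. outward) unit normal keeps that normal direction, and the offset map $p\mapsto p\mp t\,n(p)$ carries outer normals to outer normals for as long as it is an immersion -- for the time-reversed flow this holds for every $t$, since $K(t)=K(0)+t\,\BB^d$ stays smooth, and for the wavefront flow while $t<r_{\min}(0)$, where $\bd K(t)$ is still an embedded graph over $\S^{d-1}$. Equivalently, on all of $\S^{d-1}$ one has $h_{K(t)}=h_{K(0)}\mp t$, so $\nabla_{\S^{d-1}}h_{K(t)}=\nabla_{\S^{d-1}}h_{K(0)}$ is independent of $t$. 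Hence the set of critical points of $r$ is the same for every admissible $t$, and $N(t)$ is constant.

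The abrasion model is where monotonicity has content. Here, as recorded in the proof of Theorem~\ref{thm:isoperimetric_quotient}, $h_{K(t)}(u)=h_{K(0)}(u)-t$ holds for every $u\in N(K(t))$, and the set $N(K(t))$ of ``active'' normal directions is nonincreasing in $t$ with respect to inclusion. Fix a value of $t$ at which $N(t)$ is locally constant (this holds off a measure-zero set, as noted before the statement). A smooth boundary point of $K(t)$ with outer normal $u\in N(K(t))$ then sits at $(h_{K(0)}(u)-t)u+\nabla_{\S^{d-1}}h_{K(0)}(u)$, which is radial exactly when $\nabla_{\S^{d-1}}h_{K(0)}(u)=0$, while the non-smooth boundary points are kinks of $r$ rather than zeros of $\nabla r$. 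Therefore
\[
N(t)=\#\bigl\{\,u\in N(K(t)):\nabla_{\S^{d-1}}h_{K(0)}(u)=0\,\bigr\}.
\]
The set $\{u:\nabla_{\S^{d-1}}h_{K(0)}(u)=0\}$ is fixed once and for all, while $N(K(t))$ only shrinks; hence $N(t)$ is a nonincreasing integer-valued function, i.e. it decreases monotonically.

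The step that needs care is the accounting at the non-smooth part of $\bd K(t)$ in the abrasion model: one must check that when a zero $u_0$ of $\nabla_{\S^{d-1}}h_{K(0)}$ drops out of $N(K(t))$ -- i.e. is swallowed by the normal cone of a freshly formed edge or vertex -- the corresponding critical point of $r$ really disappears (it becomes a non-differentiable ridge or peak of $r$, not a zero of $\nabla r$) and that no critical point is ever produced, which is precisely the monotonicity of $N(K(t))$; the genericity of $t$, consistent with the stated fact that $N(t)$ is constant for a.e.\ $t$, keeps these transitions clean. In the plane one can also argue directly from Equation~(\ref{PDE}): differentiating it in $\varphi$ gives $\partial_t(\partial_\varphi r)=0$ at every point with $\partial_\varphi r=0$, so critical points neither move nor merge and can only be annihilated when a corner forms and absorbs them -- again yielding monotone decrease.
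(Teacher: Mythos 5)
Your treatment of the wavefront and time-reversed models is fine in spirit and agrees with the paper's observation (each point keeps its normal line, so criticality is preserved), but note that you only cover the wavefront model for $t<r_{\min}(0)$, where the front is still a convex graph over $\S^{d-1}$ and the support-function identity $h_{K(t)}=h_{K(0)}-t$ makes sense; the statement is intended to hold through the singular regime as well, which the paper handles by following the straight point-trajectories rather than the support function.

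The genuine gap is in the abrasion count. Your key identity $N(t)=\#\{u\in N(K(t)):\nabla_{\S^{d-1}}h_{K(0)}(u)=0\}$, and the formula locating a smooth boundary point at $(h_{K(0)}(u)-t)u+\nabla_{\S^{d-1}}h_{K(0)}(u)$, presuppose that $h_{K(0)}$ is differentiable at the relevant directions, i.e.\ that all faces of $K(0)$ are singletons; this fails exactly for bodies with flat faces (polytopes are squarely within the scope of the paper's non-degeneracy assumption). Worse, even after the natural repair --- replacing ``$\nabla_{\S^{d-1}}h_{K(0)}(u)=0$'' by ``the foot $(h_{K(0)}(u)-t)\,u$ lies in the corresponding face'' --- the identity is false: for a planar polygon whose top edge contains the foot of the perpendicular from $o$, abrasion shrinks that edge from both ends, and at some time the foot slides off the edge; the critical point dies although $u$ stays in $N(K(t))$ and stays in your fixed set. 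From then on $N(t)$ is strictly smaller than your right-hand side, so monotone shrinking of $N(K(t))$ alone does not imply monotone decrease of $N(t)$. Your closing paragraph flags exactly the missing fact (``no critical point is ever produced'') but then identifies it with the monotonicity of $N(K(t))$, which does not suffice. The fix is to rule out births directly: if $p$ is a critical point of $K(t)$ with normal $u$, then $q=p+tu$ satisfies $\langle q,u\rangle=h_K(u)$, lies on $\bd K$, and any further normal $v$ of $K$ at $q$ would give $\langle p,v\rangle=h_K(v)-t\langle u,v\rangle\leq h_K(v)-t$, forcing $\langle u,v\rangle\geq 1$, i.e.\ $v=u$; hence $q$ is a smooth point of $\bd K$ parallel to $u$, i.e.\ a critical point of $K$. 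This injection, combined with $K(t_2)=\bigl(K(t_1)\bigr)(t_2-t_1)$, gives $N(t_2)\leq N(t_1)$, which is the rigorous form of the paper's own argument: the finitely many critical points travel on straight normal trajectories, criticality is preserved along them, and in the abrasion model trajectories can only terminate, each termination decreasing $N(t)$.
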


\begin{proof}
The statement regarding the Eikonal wavefront model can be seen immediately as we realize that under (\ref{e1}), each point is moving along the associated inward surface normal. Points are critical if that normal passes through the reference point, so each point which is critical will remain critical and each point which
is non-critical will remain non-critical under the flow (\ref{e1}). The same observation applies to the time-reversed Eikonal model. In the Eikonal wavefront model and in the time-reversed Eikonal model points travel on infinite (straight)  trajectories, in the Eikonal abrasion model the straight trajectory is terminated after finite time. We know that the longest trajectories are of length $t=r(K)$.  In a non-degenerate case, that is, assuming that $K$ has finitely many critical points, there are finitely many trajectories we need to keep track of. Whenever a trajectory of a critical point terminates, $N(t)$ will decrease.
\end{proof}

Note that in the planar case, the first part of the statement also follows by the local analysis of codimension one saddle-node bifurcations of critical points of $r(\varphi)$ evolving under (\ref{PDE}). For details of this type of analysis see \cite{Dommon}.


\begin{rem}
Proposition \ref{crit} applies if the distance is measured from any fixed reference point. In physical applications often the center of mass is used as reference.
As this center, in general, may vary in time, the  monotonicity of critical points does not follow from our argument; in \cite{Dommon} an explicit counterexample is shown for this case.
\end{rem}

\subsection{Nonlinear theory}
Our analysis so far concerned the monotonicity of $I(t)$. Now we show that, at least in a special case, one may obtain information even on concavity.

\begin{prop}\label{prop:concavity}
If $K$ is a smooth plane convex body, different from a circle and with minimal curvature radius $r_{\min}(0)$, then on the interval $[0,r_{\min}(0)]$, $I(t)$ is a strictly concave function.
\end{prop}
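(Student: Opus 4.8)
The plan is to exploit the fact that for a smooth plane convex body $K$ with minimal curvature radius $r_{\min}(0)$, the inner parallel bodies are genuinely smooth parallel bodies on the whole interval $[0, r_{\min}(0)]$, so that $K(t) = K_0 + (t_0-t)\BB^2$ holds for $t_0 = r_{\min}(0)$ and all $t \in [0,t_0]$. In the planar case, with $F_0 = \BB^2$, one has $V(K(t)) = A(K(0)) - t\cdot L(K(0)) + \pi t^2$ where $L$ denotes perimeter, and $A(K(t)) = \mathrm{perim}(K(t)) = L(K(0)) - 2\pi t$ (here $A(\cdot)$ in the paper's notation is the $1$-dimensional surface area, i.e.\ the perimeter). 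Writing $a = A(K(0))$, $\ell = L(K(0))$, this makes $I(t) = \dfrac{a - \ell t + \pi t^2}{(\ell - 2\pi t)^2}$ an explicit rational function on $[0, r_{\min}(0)]$, and the whole proposition reduces to a calculus computation: show $I''(t) < 0$ throughout, using the strict isoperimetric inequality $\ell^2 > 4\pi a$ (strict since $K$ is not a circle) and the fact that $\ell - 2\pi t > 0$ on the interval (because $r_{\min}(0) \le r(K)$ and $\ell - 2\pi r(K) \ge 0$, with perimeter staying positive).

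First I would justify carefully that $K(t)$ is smooth, hence a true outer parallel body of $K_0 = K(r_{\min}(0))$, for every $t \in [0, r_{\min}(0)]$; this is exactly the observation already made in the proof of Theorem~\ref{thm:isoperimetric_quotient} (``if $K_0$ is smooth, then so is $K(t)$''), combined with the classical fact that inner parallel bodies at distance less than $r_{\min}$ of a $C^2_+$ convex body remain $C^2_+$. Second, I would record the two quadratic/linear formulas for $V(K(t))$ and $A(K(t))$ above — these are the $d=2$ instance of the mixed-volume expansion (\ref{eq:mixed_volumes}) with $W_0 = a$, $2W_1 = \ell$, $W_2 = \pi$. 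Third, I would substitute into $I(t) = V/A^2$ and compute the second derivative. A clean way is to set $s = \ell - 2\pi t > 0$ (so $t = (\ell - s)/(2\pi)$, $dt = -ds/(2\pi)$), under which the numerator becomes, after simplification, a quadratic in $s$ of the form $\frac{1}{4\pi}\bigl(s^2 + (4\pi a - \ell^2)\bigr)$, so that $I = \frac{1}{4\pi}\bigl(1 + (4\pi a - \ell^2)s^{-2}\bigr)$. Then $\dfrac{d^2 I}{ds^2} = \dfrac{1}{4\pi}(4\pi a - \ell^2)\cdot 6 s^{-4}$, which is strictly negative exactly when $\ell^2 > 4\pi a$; since $s$ is an affine function of $t$, concavity in $s$ is equivalent to concavity in $t$, and we are done.

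The main obstacle I anticipate is not the computation — once the reduction to the explicit rational function is in hand, strict concavity is immediate from the strict isoperimetric inequality — but rather the bookkeeping at the \emph{endpoint} $t = r_{\min}(0)$: one must be sure that $A(K(r_{\min}(0))) = \ell - 2\pi r_{\min}(0)$ is still positive (equivalently that $K(r_{\min}(0))$ is a nondegenerate convex body, not a point or segment), so that $I$ is well-defined and $C^2$ on the \emph{closed} interval and the sign of $I''$ is controlled uniformly. This positivity follows because $r_{\min}(0) \le r(K)$: indeed a disk of radius $r_{\min}(0)$ centered at a suitable point is contained in $K$, so $r(K) \ge r_{\min}(0)$, and on $[0, r(K))$ the perimeter $A(K(t)) = \ell - 2\pi t$ stays strictly positive; continuity then gives $A(K(r_{\min}(0))) \ge \ell - 2\pi r(K) + 2\pi(r(K) - r_{\min}(0)) > 0$ unless $r_{\min}(0) = r(K)$, in which case one checks directly that $A(K(r(K)))$ is still positive (the degenerate limit shape has positive $1$-dimensional measure in the plane) — a short case that should be dispatched explicitly. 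The reader who wants to avoid the endpoint subtlety altogether can instead prove strict concavity on every compact subinterval $[0, r_{\min}(0) - \delta]$ and pass to the limit, since strict concavity on the interior plus continuity on the closed interval yields concavity on the closure, and the explicit formula for $I''$ shows it is in fact strict up to and including the endpoint whenever $A$ stays positive there.
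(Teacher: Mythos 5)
Your proposal is correct and follows essentially the same route as the paper: on $[0,r_{\min}(0)]$ the inner parallel bodies satisfy $K(t)=K_0+(t_0-t)\BB^2$, so the exact Steiner formulas make $I(t)$ an explicit rational function whose second derivative is negative by the strict isoperimetric inequality $\ell^2>4\pi a$ (the paper states the same conclusion via its mixed-volume expansion, writing $I''(t_0)=\frac{24\pi^2}{A(K_0)^2}\bigl(\frac{V(K_0)}{A(K_0)^2}-\frac{1}{4\pi}\bigr)$, which agrees with your formula after simplification). Your change of variables $s=\ell-2\pi t$ and the endpoint discussion at $t=r_{\min}(0)$ are just extra bookkeeping on top of the paper's terse argument, not a different method.
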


\begin{proof}
In this case we have $L(t) = K(t)= M(t)$, $\underline{I}(t)=I(t)$ and thus,
\[
I''(t_0)= 
\frac{24 \pi^2}{(A(K_0)^2}
\left( \frac{V(K_0)}{A(K_0)^2} - \frac{1}{4\pi} \right),
\]
which, by the isoperimetric inequality, is not positive, and if $K_0$ is not a Euclidean disk, then it is negative.
\end{proof}

Whether or not Proposition~\ref{prop:concavity} remains valid in a more general setting, is an open question.
In \cite{oumuamua} the evolution of \emph{axis ratios} has been studied numerically. The \emph{axes} of a convex body have been defined as its largest diameter $a$, its largest width $b$
orthogonal to $a$ and its largest width $c$ orthogonal both to $a$ and $b$. The corresponding axis ratios are $c/a \leq b/a$. Needless to say, these definitions do not always lead to unique axes, however, this
is the definition broadly used in the geological literature \cite{blott}.
In \cite{oumuamua} it has been observed in the context of numerical simulations that the evolution of axis ratios is also accelerating in case of  $3$-dimensional objects
and this suggests that a result analogous to Proposition \ref{prop:concavity} could be established in three dimensions.

\section{Acknowledgment}
The authors thank L\'aszl\'o Lov\'asz for motivating this research by asking the proper question, and an anonymous referee for many helpful remarks that improved the quality of the paper. This research has been supported by the Hungarian Research Fund (OTKA) grant 119245.

\end{document}